\theoremstyle{plain}
\newtheorem{thm}{Theorem}[section]
\newtheorem{cor}[thm]{Corollary}
\newtheorem{lemma}[thm]{Lemma}
\theoremstyle{definition}
\theoremstyle{remark}
\newtheorem{rmk}[thm]{Remark}
\newcommand{\BQ}{{\mathbb{Q}}}
\newcommand{\BR}{{\mathbb{R}}}
\newcommand{\BZ}{{\mathbb{Z}}}
\newcommand{\CE}{{\mathcal E}}
\newcommand{\CI}{{\mathcal I}}
\newcommand{\CL}{{\mathcal L}}
\newcommand{\Fg}{{\mathfrak{g}}}
\newcommand{\Fq}{{\mathfrak{q}}}
\DeclareMathOperator{\Hilb}{Hilb}
\DeclareFontFamily{OT1}{rsfs}{}
\DeclareFontShape{OT1}{rsfs}{n}{it}{<-> rsfs10}{}
\DeclareMathAlphabet{\curly}{OT1}{rsfs}{n}{it}
\newcommand\ext{\curly Ext}
\newcommand{\p}{\mathbb{P}}
\newcommand\id{\operatorname{id}}
\newcommand\End{\operatorname{End}}
\newcommand{\Pic}{\mathop{\rm Pic}\nolimits}
\renewcommand{\sl}{{\mathfrak{s}\mathfrak{l}}}
\newcommand{\so}{{\mathfrak{s}\mathfrak{o}}}
\newcommand{\SO}{{\mathrm{SO}}}
\begin{document}
\title[A Lie algebra action]{
A Lie algebra action on the Chow ring of the Hilbert scheme of points of a K3 surface}
\date{\today}

\author{Georg Oberdieck}
\address{University of Bonn, Institut f\"ur Mathematik}
\email{georgo@math.uni-bonn.de}

\begin{abstract}
We construct an action of the Neron--Severi part of the Looijenga-Lunts-Verbitsky Lie algebra
on the Chow ring of the Hilbert scheme of points on a K3 surface.
This yields a simplification of Maulik and Negut's proof that the cycle class map is injective on the subring generated by divisor classes
as conjectured by Beauville.
The key step in the construction is an explicit formula for Lefschetz duals in terms of Nakajima operators. 
Our results also lead to a formula for the monodromy action on Hilbert schemes in terms of Nakajima operators.
\end{abstract}

\maketitle

\section{Introduction}
\subsection{Chow}
Let $X$ be a smooth complex projective variety of dimension $m$.
Let $h \in \End H^{\ast}(X,\BQ)$ be the operator that acts on $H^i(X,\BQ)$ by multiplication with $i-m$.
Let also $e_a \in \End H^{\ast}(X,\BQ)$ denote the operator of cup product with a given element $a \in H^2(X,\BQ)$.
The element $a$ is called \emph{Lefschetz} if there exists an operator $f_a \in \End H^{\ast}(X,\BQ)$
such that $e_a, f_a, h$ satisfy the $\sl_2$-commutation relations
\[ [e_a, f_a] = h, \quad [h,e_a] = 2 e_a, \quad [h, f_a] = -2 f_a. \]
In this case we say $(e_a, f_a, h)$ is a Lefschetz triple.
The operator $f_a$, if it exists, is unique and is called the Lefschetz dual to $e_a$. 
By the Hard Lefschetz theorem every ample class on $X$ is Lefschetz.
More generally, an element $a$ is Lefschetz precisely if the morphism
$e_a^{s} : H^{m-s}(X) \to H^{m + s}(X)$
is an isomorphism for every $s \geq 0$.
In particular being Lefschetz is a Zariski open condition.

The total Lie algebra of $X$ introduced by Looijenga and Lunts \cite{LL}
and Verbitsky \cite{V} is the Lie subalgebra
\[ \Fg(X) \subset \End H^{\ast}(X,\BQ) \]
generated by all Lefschetz triples $(e_a, f_a, h)$.
We also consider the Neron-Severi Lie algebra of $X$ which is defined as the Lie subalgebra
\[ \Fg_{\mathrm{NS}}(X) \subset \Fg(X) \]
generated by all Lefschetz triples such that $a$ is algebraic, i.e. $a \in H^{1,1}(X,\BQ)$.

Assume now that $X$ is irreducible holomorphic symplectic, that is it is simply connected
and $H^0(X, \Omega_X^2)$ is generated by a holomorphic symplectic form $\sigma$.
The prime example of such a variety is the Hilbert scheme of points of a K3 surface.
By a result of Verbitsky \cite{V} and Looijenga and Lunts \cite{LL} we have
\[ \Fg(X) \otimes \BR = \so_{\BR}(4,b_2(X)-2), \quad \Fg_{\mathrm{NS}}(X) \otimes \BR = \so_{\BR}(2, \rho(X)) \]
where $b_i(X)$ are the Betti numbers and $\rho(X)$ is the Picard rank of $X$.

Let $A^{\ast}(X)$ denote the Chow ring of $X$ taken here always with $\BQ$-coefficients.
The group of correspondences $A^{\ast}(X \times X)$ carries a natural ring structure given by composition.
The cycle class map $\mathrm{cl} : A^{\ast}(X \times X) \to \End H^{\ast}(X)$ is a ring homomorphism.
Our main result says that 
for Hilbert schemes of points of K3 surfaces
the action of the Neron-Severi Lie algebra on cohomology lifts to an action on Chow groups by correspondences:

\vspace{5pt}
\begin{thm} \label{Thm1}
Let $X$ be the Hilbert scheme of points of a smooth projective K3 surface.
There exists a Lie algebra homomorphism $\rho : \Fg_{\mathrm{NS}}(X) \to A^{\ast}(X \times X)$ such that the following diagram commutes:
\[
\begin{tikzcd}
\Fg_{\mathrm{NS}}(X) \ar{r}{\rho} \ar{dr} & A^{\ast}(X \times X) \ar{d}{\mathrm{cl}} \\
& \End H^{\ast}(X,\BQ).
\end{tikzcd}
\]
\end{thm}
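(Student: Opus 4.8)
The plan is to realise the finite-dimensional Lie algebra $\Fg_{\mathrm{NS}}(X)$ inside the correspondence ring $A^{\ast}(X \times X)$ by exhibiting all of its generators as polynomials in Nakajima operators. Work with the total Chow group $\mathbb{V} = \bigoplus_{n \geq 0} A^{\ast}(\Hilb^{n}(S))$ together with the Nakajima creation and annihilation correspondences $\mathfrak{p}_{\pm i}(\gamma)$ ($i \geq 1$, $\gamma \in A^{\ast}(S)$), which satisfy the Heisenberg commutation relations already on Chow groups. Since $H^{1,1}(X,\BQ) = \mathrm{NS}(X)_{\BQ} = \mathrm{NS}(S)_{\BQ} \oplus \BQ\,\delta$ (with $2\delta$ the boundary divisor) and the assignment $a \mapsto e_{a}$ is linear, the Lie algebra $\Fg_{\mathrm{NS}}(X)$ is generated by $h$, by the operators $e_{D}$ with $D$ ranging over a finite basis of $\mathrm{NS}(X)_{\BQ}$, and by the Lefschetz duals $f_{D}$ for a finite set of Lefschetz classes $D$ spanning the weight $-2$ eigenspace. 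I will lift each of these generators to an operator on $\mathbb{V}$ that is a polynomial in the $\mathfrak{p}_{\pm i}$, hence a correspondence in $A^{\ast}(X \times X)$; the point will then be that every Lie bracket of the lifts is forced by the Heisenberg relations together with the Beauville--Voisin intersection theory of $S$, so that these brackets close up in $A^{\ast}(X \times X)$ exactly as in $\Fg_{\mathrm{NS}}(X)$.

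Two of the three kinds of generator are elementary. The grading operator $h$, which acts on $A^{p}(\Hilb^{n}(S))$ by $2p - 2n$, is assembled from the number and energy operators of the Nakajima calculus twisted by the Chow--K\"unneth decomposition $\Delta_{S} = \pi_{0} + \pi_{2} + \pi_{4}$ of the surface, and is in particular a correspondence. For $e_{D}$ one invokes the Chow-theoretic form of the formulas of Lehn and of Li--Qin--Wang for cup product with a divisor on $\Hilb^{n}(S)$; these simplify because $K_{S} = 0$, and the only classes of $S$ that occur lie in the Beauville--Voisin subring $R(S) = \langle 1_{S}, \Pic(S)_{\BQ}, \pt_{S}\rangle \subseteq A^{\ast}(S)$, which maps isomorphically onto its image in cohomology. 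The essential new ingredient is an explicit formula for the Lefschetz dual $f_{D}$ as a polynomial in the Nakajima operators (again with coefficients in $R(S)$): as there is no geometric construction of $f_{D}$, one writes down an ansatz reflecting the symmetry between creation and annihilation operators in the formula for $e_{D}$, and then pins it down and checks it by imposing the $\sl_2$-relations $[e_{D}, f_{D}] = h$ and $[h, f_{D}] = -2 f_{D}$. That verification uses nothing but the Heisenberg relations and the intersection numbers of $1_{S}$, $D$ and $\pt_{S}$ on $S$; as these inputs are literally identical over the Chow ring and over cohomology, and the relations hold in cohomology by the Hard Lefschetz theorem, they hold in $A^{\ast}(X \times X)$ as well.

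Finally, assemble $\rho$. Let $\Fh \subseteq A^{\ast}(X \times X)$ be the Lie subalgebra generated by the lifts of $h$, the $e_{D}$ and the $f_{D}$. By construction $\mathrm{cl}$ sends these lifts to the corresponding generators of $\Fg_{\mathrm{NS}}(X)$, so $\mathrm{cl}(\Fh) = \Fg_{\mathrm{NS}}(X)$. Moreover every iterated bracket of the lifted generators is computed, through the Heisenberg relations and the intersection theory of the self-products of $S$ (where again the relevant Beauville--Voisin subrings inject into cohomology), by exactly the manipulations that compute the corresponding bracket in $\End H^{\ast}(X)$; hence every Lie relation holding among the cohomological generators persists among the lifts, so $\Fh$ is a quotient of $\Fg_{\mathrm{NS}}(X)$ and $\dim_{\BQ} \Fh \leq \dim_{\BQ} \Fg_{\mathrm{NS}}(X)$. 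Combined with the surjection $\mathrm{cl} \colon \Fh \to \Fg_{\mathrm{NS}}(X)$ this forces $\mathrm{cl}|_{\Fh}$ to be an isomorphism of Lie algebras, and $\rho := (\mathrm{cl}|_{\Fh})^{-1}$ is the desired homomorphism, making the triangle commute. I expect the main obstacle to be the construction of $f_{D}$: one must find a correspondence-valued closed formula that secretly encodes the full Lefschetz decomposition and is nonlinear in $D$, and then push through the commutator bookkeeping; a secondary technical point is to ensure that the Nakajima commutation relations and the cup-product formulas are available over the Chow ring with the correct normalisations, which rests on the Beauville--Voisin theory of $S$ and its powers.
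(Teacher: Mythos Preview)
Your outline is the paper's approach: lift $e_a$, $h$, and a linearised dual $\widetilde f_a = (a,a) f_a$ to explicit polynomials in Nakajima correspondences, then verify all the $\so(A^1(X)_{\BQ}\oplus U_{\BQ})$ relations by reducing them (via the Heisenberg relations) to cycle identities on $S^k$ that lie in the Beauville--Voisin tautological subring and can therefore be checked in cohomology. Two points need correction, however.

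First, your description of $h$ is false on Chow. The lift of $h$ does \emph{not} act on $A^{p}(\Hilb^{n}S)$ by multiplication by $2p-2n$; that holds only after applying the cycle class map. On $A_0(\Hilb^n S)$, for instance, the lifted $h$ has eigenvalues $0,2,\dots,2n$, and the resulting eigenspace decomposition is precisely the Beauville--Voisin splitting. So you cannot define $h$ as the grading operator and then look for a correspondence realising it; you must write down the explicit formula $h = 2\sum_{m>0}\tfrac1m\,\Fq_m\Fq_{-m}(c_2-c_1)$ (with $c$ the Beauville--Voisin point) and check the relations from there. Second, the phrase ``ansatz reflecting the symmetry between creation and annihilation operators'' and the claim that only intersection numbers on $S$ are needed both understate the case of $a=\delta$. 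While $e_\alpha,\widetilde f_\alpha$ for $\alpha\in A^1(S)$ are quadratic and arise from a clean correspondence $T_\Gamma$ construction on $S\times S$, the operator $\widetilde f_\delta$ is a genuinely new cubic in the $\Fq_i$ whose argument is a six-term class in $A^*(S^3)$ built from $\Delta_{ij}$ and $c_i$ with coefficients $1/k^2,\ 2/(jk)$, etc.; no visible symmetry produces it from $e_\delta$. The key check $[e_\delta,\widetilde f_\delta]=(2-2n)h$ requires the Beauville--Voisin decomposition of the small diagonal $\Delta_{123}\in A^*(S^3)$, the identity $\sum_{i+j=k}ij=\tfrac16 k(k^2-1)$, and crucially $e(S)=24$. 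Finally, your appeal to Hard Lefschetz is circular: Hard Lefschetz guarantees that \emph{some} $f_D$ exists in cohomology, not that your chosen formula satisfies the relations. You must either compute directly, or invoke Verbitsky's determination of $\Fg(X)$ in cohomology together with Yin's theorem that tautological relations on $S^k$ (here $k\le 5$) hold in Chow once they hold in cohomology.
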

\vspace{7pt}

We will prove Theorem~\ref{Thm1} in Section~\ref{Section_Formulas}  by explicitly constructing a representation $\rho$ with the desired properties.
Under our $\rho$ the operators $e_a$ lift to the cup product with the divisor class $a$. 
The claim that the Lefschetz dual $f_a$ lifts to Chow
is precisely the Grothendieck standard conjecture of Lefschetz type \cite{Kl}.
The standard conjectures have been proven for Hilbert schemes of points on surfaces by Arapura \cite{Ar}.
The main improvement of our approach is that we give an explicit lift of the operator $f_a$
and show that also all the relations between the $e_a$'s and $f_b$'s lift.
For example, the operator $h$ lifts to an endomorphism that is diagonalizable on the group of zero cycles and whose eigenspaces
decomposition recovers the Beauville-Voisin decomposition, see Remark~\ref{Remark_h}.
A more general decomposition of the Chow motive of $X$ into eigenmotives under the action of $h$ is obtained in the subsequent work \cite{NOY}.

It is natural to expect that the conclusion of Theorem~\ref{Thm1} holds for all irreducible holomorphic symplectic varieties.
The standard conjectures for irreducible holomorphic symplectic varieties deformation equivalent to the Hilbert schemes of points of K3 surfaces have been proven by
Charles and Markman \cite{CM} by deforming (the lift of) the operator $f_a$.
The difficulty in extending Theorem~\ref{Thm1} beyond Hilbert schemes is to deform also the relations between the operators $e_a$ and $f_b$.

For the proof of the theorem we consider the action of the Nakajima operators $\Fq_n$ on the direct sum of Chow groups
\[ A^{\ast}( \Hilb(S)) = 
\bigoplus_{n=0}^{\infty} A^{\ast}( \Hilb^n(S) ), \]
where we let $\Hilb^n(S)$ denote the Hilbert scheme of $n$ points of a projective K3 surface $S$.
See also Section~\ref{Section_Nakajima_operators} for further details on Nakajima operators.
The action of the operators $e_a$ on cohomology was expressed in terms of Nakajima operators by Lehn \cite{Lehn}.
By recent work of Maulik and Negut \cite{MN} the formula of Lehn holds also in Chow.
We prove Theorem~\ref{Thm1} by explicitly writing lifts of the $f_a$ in terms of Nakajima operators and show
they satisfy the required commutation relations.

We have the following consequence of Theorem~\ref{Thm1} which was conjectured by Beauville and first proven by Maulik and Negut.
\begin{cor}[\cite{MN}] \label{cor} Let $S$ be a K3 surface.
The cycle map $A^{\ast}( \Hilb^n(S) ) \to H^{\ast}( \Hilb^n(S) )$ is injective on the subring
generated by divisor classes.
\end{cor}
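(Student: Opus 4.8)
The plan is to derive Corollary~\ref{cor} from Theorem~\ref{Thm1} by a short representation-theoretic argument that uses no further geometric input. Fix $X = \Hilb^n(S)$ and set $m = \dim X = 2n$. Let $R \subset A^{\ast}(X)$ be the subring generated by divisor classes; then $\mathrm{cl}(R) \subset H^{\ast}(X)$ is the subring generated by divisor classes in cohomology, and it suffices to prove that $\mathrm{cl}$ is injective on $R$. Through $\rho$ and the action of correspondences on cycles the Lie algebra $\Fg_{\mathrm{NS}}(X)$ acts on $A^{\ast}(X)$, and by the commutativity of the diagram in Theorem~\ref{Thm1} together with the functoriality of the cycle class map under correspondences, the map $\mathrm{cl} \colon A^{\ast}(X) \to H^{\ast}(X)$ is $\Fg_{\mathrm{NS}}(X)$-equivariant. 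I will also use the $\mathrm{ad}(h)$-eigenspace decomposition $\Fg_{\mathrm{NS}}(X) = \Fg_{-2} \oplus \Fg_0 \oplus \Fg_2$, in which $\Fg_2$ is the abelian subalgebra spanned by the operators $e_a$ for $a$ a divisor class, $\Fg_{-2}$ is spanned by the $f_a$, and $\Fg_0$ contains $h$; under $\rho$ these three summands are carried to correspondences of degrees $+1$, $0$ and $-1$ respectively.

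First I would show that $R$ is an $\Fg_{\mathrm{NS}}(X)$-submodule of $A^{\ast}(X)$ which is cyclic with generator the unit $1_X \in A^0(X)$. Since under $\rho$ each $e_a$ acts as cup product with $a$, we have $\Fg_2 \cdot R \subseteq R$ and in fact $R = U(\Fg_2) \cdot 1_X$ is exactly the span of the monomials in divisor classes. Moreover $1_X$ is a lowest weight vector for $\Fg_{\mathrm{NS}}(X)$: because $\rho(f_a)$ has degree $-1$ we get $\rho(f_a) 1_X \in A^{-1}(X) = 0$, so $\Fg_{-2} \cdot 1_X = 0$; and because $\rho(\Fg_0)$ consists of degree-zero correspondences, $\Fg_0 \cdot 1_X \subseteq A^0(X) = \BQ \cdot 1_X$, with $h$ acting by the scalar $-m$ as forced by the cycle class map. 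A standard Poincar\'e--Birkhoff--Witt argument now yields $U(\Fg_{\mathrm{NS}}(X)) \cdot 1_X = U(\Fg_2) \cdot 1_X = R$, so $R$ is a cyclic submodule.

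Next I would observe that $R$ is finite dimensional over $\BQ$, since it is generated as a graded algebra by the finite-dimensional space $A^1(X)$ and vanishes in codimension $> m$. As $\Fg_{\mathrm{NS}}(X) \otimes \BR = \so_{\BR}(2,\rho(X))$ is semisimple (here $\rho(X) \geq 1$ since $X$ is projective), the module $R$ is completely reducible, say $R = \bigoplus_i L_i^{\oplus n_i}$ with the $L_i$ pairwise non-isomorphic irreducible. The decisive input is then a weight count: induction on the codimension, using $[h, e_a] = 2 e_a$ and $\rho(h) 1_X = -m\,1_X$, shows that $h$ acts on $R \cap A^j(X)$ by the scalar $2j - m$, so the $h$-eigenspace of $R$ for the minimal eigenvalue $-m$ is exactly $A^0(X) \cap R = \BQ \cdot 1_X$, which is one-dimensional. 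Hence precisely one summand $L_{i_0}$ contains an $h$-weight vector of weight $-m$, it occurs with multiplicity one, and $1_X$ lies in that copy of $L_{i_0}$; since $1_X$ generates $R$, this forces $R = L_{i_0}$, i.e.\ $R$ is irreducible. Finally $\ker(\mathrm{cl}|_R)$ is an $\Fg_{\mathrm{NS}}(X)$-submodule of the irreducible module $R$ which does not contain $1_X$, because $\mathrm{cl}(1_X)$ is the nonzero unit of $H^{\ast}(X)$; therefore $\ker(\mathrm{cl}|_R) = 0$, which is the assertion of the corollary.

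The one point that needs genuine care is the second step: that $R$ is a \emph{submodule}, equivalently that the Lefschetz duals $\rho(f_a)$ produce no new classes when applied inside $R$, together with the fact that $\Fg_0$ acts on $A^0(X)$ by scalars. Both follow once the lifts $\rho(e_a)$, $\rho(f_a)$ and $\rho(h)$ are known to be bihomogeneous correspondences of the expected bidegree, which is immediate from their explicit expressions in terms of Nakajima operators in Section~\ref{Section_Formulas}. So beyond Theorem~\ref{Thm1} the corollary costs only this bookkeeping and the representation theory above.
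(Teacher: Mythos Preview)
Your argument is correct and follows essentially the same route as the paper: establish that the subring $R$ generated by divisors is an irreducible $\Fg_{\mathrm{NS}}(X)$-module and conclude via a Schur-type argument that $\ker(\mathrm{cl}|_R)$, being a proper submodule, must vanish. The paper's proof compresses this to a single sentence asserting irreducibility, whereas you supply the details (lowest-weight vector, complete reducibility, one-dimensionality of the minimal $h$-eigenspace); this extra bookkeeping is sound and indeed what lies behind the paper's assertion.
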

\begin{proof}
The subring of $A^{\ast}(\Hilb^n( S))$ generated by divisor classes is an irreducible representation of
the simple Lie algebra $\Fg_{\mathrm{NS}}(X)$, hence the cycle class map restricted to it is either injective or zero.
\end{proof}
Theorem~\ref{Thm1} and hence also the proof of Corollary~\ref{cor} are not independent from \cite{MN}
and should be rather viewed as replacing its representation-theoretic part. 
In 
\cite{MN} 
Lehn's formula is used to construct an action of the product of the Heisenberg and Virasoro algebra on $A^{\ast}(\Hilb(S))$.
Beauville's conjecture is then deduced from Schur's Lemma.
Our approach here also relies on Schur's Lemma but has the advantange that the Lie algebra $\Fg_{\text{NS}}(X)$ involved is much smaller (for once it is finite-dimensional)
and that the argument might generalize to other cases.\footnote{On ther other hand,
Maulik and Negut's argument yields the stronger statement that the cycle class is injective on the subring generated by all small tautological classes \cite{MN}.}

\subsection{Application to monodromy}
Recall that the locus of Hilbert schemes of points of K3 surfaces is of codimension $1$ in the moduli space of irreducible holomorphic symplectic varieties.
In particular, for all $n \geq 2$ the monodromy group of $X = \Hilb^n(S)$ is strictly larger than the monodromy group of the underlying K3 surface $S$.
On the other hand the Nakajima operators define a basis of the cohomology of $\Hilb^n(S)$
which strongly depends on the Hilbert scheme structure.
A basic question is how the monodromy group acts on this basis,
and whether its action on cohomology can be written in terms of Nakajima operators.

In Theorem~\ref{Thm2} we describe the action of the total Lie algebra $\Fg(X)$ on cohomology in terms of Nakajima operators.
This leads to a formula for the monodromy action as follows.
The degree zero part of the Lie algebra $\Fg_0(X)$ is isomorphic to $\so( H^2(X,\BQ) ) \oplus \BQ h$
where $H^2(X,\BQ)$ is endowed with the Beauville-Bogomolov quadratic form.
Its action on $H^{\ast}(X)$ integrates to an action $\sigma : \SO( H^2(X,\BZ)) \to \End H^{\ast}(X,\BQ)$.
By a result of Markman \cite{Markman2} the monodromy group of $X$ is
\[ \mathrm{Mon}( X ) = \widetilde{O}^+( H^2( X,\BZ) ) \]
where the right hand side stands for orthogonal transformations which preserve the orientation
and act by $\pm 1$ on the discriminant.
By \cite[Lemma~4.13]{Markman} the monodromy action on cohomology agrees with $\sigma$
on an index $2$ subgroup of the intersection
\[ \SO( H^2(X,\BZ) ) \cap \widetilde{O}^+( H^2( X,\BZ) ). \]
This leads to the desired formulas up to finite index.

The description of the monodromy in terms of Nakajima operators was the original motivation for considering the operators $f_a$ in the Nakajima basis.
It will also play an important role in holomorphic anomaly equations for Hilbert schemes of points of K3 surfaces in forthcoming work.

\subsection{Plan}
In Section~\ref{Section_Prelim} we give preliminaries on the Lie algebra, Nakajima operators, and the Chow ring of K3 surfaces.
In Section~\ref{Section_Formulas} we state the formulas for the Lefschetz duals $f_a$ and give the proof of Theorem~\ref{Thm1}.

\subsection{Acknowledgements}
I would like thank Hsueh-Yung Lin and Andrei Negut for interesting discussions
on how one may deform the algebra action, and the latter for giving an inspiring talk in Bonn in June 2019.
I'm also very grateful to Junliang Shen and Qizheng Yin for useful discussions.

\section{Preliminaries} \label{Section_Prelim}
\subsection{The Lie algebra made explicit}
Let $V$ be a vector space with a non-degenerate symmetric bilinear form $( - , - )$ on it.
The wedge prouct $\wedge^2 V$ carries naturally the structure of a Lie algebra. The Lie bracket is defined by
\[ [a \wedge b, c \wedge d] = (a,d) b \wedge c - (a,c) b \wedge d - (b,d) a \wedge c + (b,c) a \wedge d \]
for all $a,b,c,d \in V$.
There exist a natural Lie algebra isomorphism $\wedge^2 V \to \so(V)$
by letting $a \wedge b$ act on $V$ via the endomorphism $(a \wedge b) v = (b,v) a - (a,v)b$.

Let $X$ be an irreducible holomorphic symplectic variety.
The Beauville-Bogomolov form is a non-degenerate quadratic form on $H^2(X,\BZ)$.
Let $U$ be the hyperbolic lattice $\binom{0\ 1}{1\ 0}$ with standard basis $e,f$.
By \cite{LL,V} we have
\[ \Fg(X) = \so( H^2(X,\BQ) \oplus U_{\BQ} ). \]
After identifying the right hand side with the second wedge product of $H^2(X,\BQ) \oplus U_{\BQ}$
as before, this isomorphism is given explicitly by
\[ e_a = e \wedge a, \quad f_a = \frac{-2}{(a,a)} f \wedge a, \quad h = 2 \cdot e \wedge f \]
for all $a \in H^2(X)$ with $(a,a) \neq 0$.
We will also use
\[ \widetilde{f}_a = -2 \cdot f \wedge a \]
which is defined for all $a$, is linear in $a$ and satisfies $\widetilde{f}_a = (a,a) f_a$ whenever $(a,a) \neq 0$.


\subsection{The Chow ring of a K3 surface}
Let $S$ be a smooth projective K3 surface and let
\[ c \in A^2(S) \]
be the class of any point on any rational curve of $S$.
Beauville and Voisin \cite{BV} prove the following basic relations:
\[ c_2(T_S) = 24 c, \quad \ell \cdot \ell' = ( \ell, \ell' ) c \]
for all $\ell, \ell' \in A^1(S)$.
They also establish the following decomposition of the class of the small diagonal $\Delta_{123}$ in the Chow ring of $S \times S \times S$:
\begin{equation} \label{decomp_delta123}
[ \Delta_{123} ] = \Delta_{12} c_3 + \Delta_{13} c_2 + \Delta_{23} c_1 - c_1 c_2 - c_1 c_3 - c_2 c_3,
\end{equation}
where following \cite{MN} we write $c_i$ for the pullback of $c$ along the projection to the $i$-th factor
and $\Delta_{ij}$ for the pullback of the class of the diagonal in $S^2$ along the projection to the $(i,j)$-factor, etc.
Parallel conventions will be followed throughout.
We will also use the following relation from \cite{BV}:
\begin{align*}
\Delta \cdot c_1 & = \Delta \cdot c_2 = c_1 \cdot c_2 \\
\quad \Delta \cdot \ell_1 & = \Delta \cdot \ell_2 = c_1 \ell_2 + \ell_1 c_2.
\end{align*} 

\subsection{Nakajima operators} \label{Section_Nakajima_operators}
Let $S$ be a smooth projective surface.
We recall the definition of Nakajima correspondences \cite{Nak, Groj} following the presentation of \cite{MN}.
Throughout the paper we will use the term `operator' synonymously with 'correspondence'.
Given a correspondence $\Gamma$ we will write $\Gamma$ also for the induced morphism on Chow groups.
Also note that the results of \cite{MN} were formulated in terms of homomorphisms of Chow groups,
but since the proofs are purely cycle-theoretic, they imply also the (in general stronger) parallel statements for correspondences.

For $n \geq 0$ and $i > 0$ consider the closed subscheme
\[ Z_{n,n+i} = \{ (\xi, x , \eta) \in \Hilb^n(S) \times S \times \Hilb^{n+i}(S) | \xi \subset \eta, \mathrm{Supp}(I_{\xi}/I_{\eta}) = \{ x \} \} \]
and let $p_1 : Z_{n,n+i} \to \Hilb^n(S)$, $p_2 : Z_{n,n+i} \to S$ and $p_3 : Z_{n,n+i} \to \Hilb^{n+i}(S)$ be the projection
to the factors.
The Nakajima operators are defined by 
\begin{align*}
\Fq_i  & = (p_2 \times p_3)_{\ast} p_1^{\ast} \\
\Fq_{-i} & = (-1)^i \cdot (p_1 \times p_2)_{\ast} p_3^{\ast}.
\end{align*}
We also set $\Fq_0 = 0$. Following \cite{MN} the $\Fq_i$ here are viewed as defining operators
\[ \Fq_i : A^{\ast}(\Hilb^n(S)) \to A^{\ast}(\Hilb^{n+i}(S) \times S). \]
The composition $\Fq_{i_1} \cdots \Fq_{i_k}$ of Nakajima operators is understood as an operator
\[ \Fq_{i_1} \cdots \Fq_{i_k} : A^{\ast}(\Hilb^n(S)) \to A^{\ast}(\Hilb^{n+i_1 + \ldots + i_k}(S) \times S^k) \]
where the operator $\Fq_{i_j}$ acts by its definition on the Hilbert scheme and by the identity on all remaining $S$-factors.
We have the Heisenberg commutation relations
\begin{equation} [\Fq_m, \Fq_n] = m \delta_{m+n,0} \mathrm{id} \times \Delta. \label{Nak_com} \end{equation}

For $\alpha \in A^{\ast}(S)$ we also write
\[ \Fq_i(\alpha) = p_{3 \ast}( p_1^{\ast}(\, \cdot \, ) \cup p_2^{\ast}(\alpha) ) \]
and similarly for negative $i$. The commutation relations read
\[ [\Fq_m(\alpha), \Fq_n(\beta)] = m \delta_{m+n,0} \langle \alpha, \beta \rangle \mathrm{id}. \]
More general given $\Gamma \in A^{\ast}(S^k)$ we let
\[ \Fq_{i_1} \cdots \Fq_{i_k}(\Gamma) : A^{\ast}(\Hilb^n(S)) \to A^{\ast}( \Hilb^{n + i_1 + \ldots + i_k}(S) ) \]
be the operator obtained by viewing $\Fq_{i_1} \cdots \Fq_{i_k}$
as a correspondence from $S^k$ to $\Hilb^{n + \sum_j i_j}(S)$ and applying it to $\Gamma$.

\section{Formulas and proofs} \label{Section_Formulas}
\subsection{Formulas}
Let $S$ be a smooth projective K3 surface.
Let $\Delta_{\Hilb^n(S)} \subset \Hilb^n(S)$ be the divisor parametrizing non-reduced subschemes
and let
\[ \delta = -\frac{1}{2} [ \Delta_{\Hilb^n(S)} ]. \]
By definition $\delta = 0$ if $n \leq 1$.
For all $n \geq 1$ we have the orthogonal decomposition
\begin{equation*} H^2(\Hilb^n(S), \BZ) \cong H^2(S, \BZ) \oplus \BZ \delta. \label{rewr} \end{equation*}
The restriction of the Beauville-Bogomolov form to the first factor is the intersection pairing on $S$.
Moreover, $(\delta, \delta) = 2-2n$.
Similarly, for algebraic classes we have
\begin{equation} A^1( \Hilb^n(S) ) \cong A^1(S) \oplus \BZ\delta. \label{a1iso} \end{equation}
We will identify classes in $A^1(S) \oplus \BZ\delta$ with their image in $A^1(\Hilb^n(S))$
under the isomorphism \eqref{a1iso} and similarly for cohomology.

Let $e_a$ be the operator
which acts on $A^{\ast}(\Hilb^n(S))$ by cup product with the class $a \in A^1(S) \oplus \BZ \delta$.
By the results of Lehn \cite[Thms. 3.5, 3.10]{Lehn} and Maulik-Negut \cite[Thm.1.6]{MN} we have 
for all $\alpha \in A^1(S)$ the equality
\begin{equation} \label{e_a}
\begin{gathered}
e_{\alpha} = -\sum_{n > 0} \Fq_{n} \Fq_{-n} ( \Delta_{\ast} \alpha) \\
e_{\delta} = -\frac{1}{6} \sum_{i+j+k=0} : \Fq_i \Fq_j \Fq_k ( \Delta_{123} ):
\end{gathered}
\end{equation} 
where $\Delta : S \to S^2$ is the inclusion of the diagonal,
and $: - :$ is the normal ordered product defined by
\[ : \Fq_{i_1} \cdots \Fq_{i_k} : \ =\  \Fq_{i_{\sigma(1)}} \cdots \Fq_{i_{\sigma(k)}} \]
where $\sigma$ is a permutation such that $i_{\sigma(1)} \geq \ldots \geq i_{\sigma(k)}$.

The formulas \eqref{e_a} hold also in cohomology for all $\alpha \in H^2(S,\BQ)$. 

Define the following operators on $A^{\ast}(\Hilb^n(S))$:
\begin{gather}
h = 2 \sum_{n > 0} \frac{1}{n} \Fq_{n} \Fq_{-n}( c_2 - c_1 ) \label{def_h} \\
\tilde{f}_{\alpha} = -2 \sum_{n > 0} \frac{1}{n^2} \Fq_{n} \Fq_{-n}( \alpha_1 + \alpha_2 ) \label{ft_a}\\
\widetilde{f}_{\delta}
=
-\frac{1}{3} \sum_{i+j+k=0} :\Fq_i \Fq_j \Fq_k \left( \frac{1}{k^2}  \Delta_{12}  + \frac{1}{j^2} \Delta_{13} + \frac{1}{i^2} \Delta_{23} + \frac{2}{j \cdot k} c_1 + \frac{2}{i \cdot k} c_2 + \frac{2}{i\cdot j} c_3 \right): .\notag
\end{gather}
We define $\widetilde{f}_a$ for all $a \in A^1(S) \oplus \BQ\delta$ by linearity in $a$.
If $(a,a) \neq 0$ we also set
\[ f_a = \frac{1}{(a,a)} \widetilde{f}_a. \]

The following implies Theorem~\ref{Thm1}.
\begin{thm} \label{Thm2} Let $S$ be a smooth projective K3 surface and let $n \geq 1$ be an integer.
\begin{enumerate}
\item[(a)] For every $a \in A^1(S) \oplus \BQ \delta$ we have
\[ [h, e_a] = 2 e_a, \quad [h, \widetilde{f}_a] = -2 \widetilde{f}_a, \quad [e_a, \widetilde{f}_a] = (a,a) h \]
as operators on $A^{\ast}(\Hilb^n(S))$.
\item[(b)] If $(a,a) \neq 0$ then $(e_a, f_a, h)$ specializes to a Lefschetz triple in cohomology.
\item[(c)] The Lie subalgebra of $A^{\ast}(\Hilb^n(S) \times \Hilb^n(S))$ generated by $e_a, \widetilde{f}_a, h$ for all $a \in A^1(S) \oplus \BQ \delta$
is isomorphic to $\so( A^1(\Hilb^n(S)) \oplus U_{\BQ} )$.
\end{enumerate}
\end{thm}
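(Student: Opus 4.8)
The plan is to verify Theorem~\ref{Thm2} by direct computation with the Heisenberg commutation relations \eqref{Nak_com}, reducing everything to (a), then deducing (b) and (c) formally. For part (a) the strategy is as follows. Both $e_a$ and $\widetilde{f}_a$ are quadratic (for $a \in A^1(S)$) or cubic (for $a = \delta$) in the Nakajima operators, while $h$ is quadratic. A commutator of a quadratic operator with a quadratic (resp.\ cubic) operator is again quadratic (resp.\ cubic) by the Heisenberg relations, since each application of \eqref{Nak_com} lowers the total number of factors by two. So all three identities in (a) are identities between explicit quadratic or cubic normally-ordered expressions in $\Fq_i$ with coefficients living in $A^{\ast}(S^k)$, and can be checked by expanding both sides. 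I would organize this by splitting into the cases $a \in A^1(S)$ and $a = \delta$, and by linearity it then holds for all $a \in A^1(S) \oplus \BQ\delta$; note the mixed bracket $[e_\alpha, \widetilde f_\delta]$ etc.\ must also be handled, but since $(\alpha,\delta) = 0$ the claim there is that the bracket vanishes, which should again be a direct Heisenberg computation. The key auxiliary inputs are the Beauville--Voisin relations, in particular the decomposition \eqref{decomp_delta123} of $\Delta_{123}$ and the relations $\Delta \cdot c_i = c_1 c_2$, $\Delta \cdot \ell_i = c_1 \ell_2 + \ell_1 c_2$, which are exactly what is needed to collapse the $S^k$-coefficients that arise when a $\Fq_i \Fq_{-i}$ pair contracts against a diagonal class inside a cubic term.

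For the $e_\alpha$-with-$e_\beta$-type bookkeeping one does not need anything, since the $e$'s commute; the real content is $[e_a, \widetilde f_a] = (a,a)h$. For $a = \alpha \in A^1(S)$ this is the cleanest case: both $e_\alpha = -\sum_{n>0}\Fq_n\Fq_{-n}(\Delta_\ast\alpha)$ and $\widetilde f_\alpha = -2\sum_{n>0}\tfrac{1}{n^2}\Fq_n\Fq_{-n}(\alpha_1+\alpha_2)$ are sums over a single pair, their commutator telescopes using \eqref{Nak_com}, and one must recognize the result as $(\alpha,\alpha)h = (\alpha,\alpha)\cdot 2\sum_{n>0}\tfrac1n\Fq_n\Fq_{-n}(c_2-c_1)$ — here the relation $\ell\cdot\ell' = (\ell,\ell')c$ on the K3 surface is what converts $\alpha\cdot\alpha$ into $(\alpha,\alpha)c$. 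For $a=\delta$ the computation is the heaviest: $e_\delta$ and $\widetilde f_\delta$ are both cubic, so $[e_\delta,\widetilde f_\delta]$ is quadratic, and one must track the normal-ordering carefully and use \eqref{decomp_delta123} to simplify the resulting coefficient in $A^\ast(S^2)$ down to the combination $c_2-c_1$ appearing in $h$. I would also double-check $[h,e_a]=2e_a$ and $[h,\widetilde f_a]=-2\widetilde f_a$ first, as a warm-up: these say that $e_a$ has weight $+2$ and $\widetilde f_a$ weight $-2$ under $h$, which should follow quickly once one observes that $h$ acts on a monomial $\Fq_{i_1}\cdots\Fq_{i_k}$ by (a scalar related to) $\sum_j \mathrm{sgn}$-type weights — more precisely $h$ should be, up to the diagonal-class decorations, the standard energy/weight operator, so $[h,\Fq_n] = \text{(const)}\,\Fq_n$ grading-wise.

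Granting (a), part (b) is immediate: for $(a,a)\neq 0$, dividing $[e_a,\widetilde f_a]=(a,a)h$ by $(a,a)$ gives $[e_a,f_a]=h$, and together with $[h,e_a]=2e_a$, $[h,f_a]=-2f_a$ this is exactly the $\sl_2$-triple; applying the cycle class map $\mathrm{cl}$ (a ring homomorphism on correspondences) and invoking that Lehn's formulas \eqref{e_a} compute the genuine cup-product operator in cohomology, we get that $e_a$ specializes to cup product with $a$, and by Hard Lefschetz for $X=\Hilb^n(S)$ (or rather by the characterization of Lefschetz classes recalled in the introduction) the cohomological $f_a$ is then forced to be the Lefschetz dual, so $(e_a,f_a,h)$ is a Lefschetz triple in the sense defined. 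Part (c): let $\Fg'$ be the Lie subalgebra of $A^\ast(\Hilb^n(S)\times\Hilb^n(S))$ generated by all $e_a,\widetilde f_a,h$. The map sending $e_a\mapsto e\wedge a$, $\widetilde f_a \mapsto -2f\wedge a$, $h\mapsto 2e\wedge f$ into $\so(A^1(\Hilb^n(S))\oplus U_\BQ) = \wedge^2(A^1(\Hilb^n(S))\oplus U_\BQ)$ respects the defining relations — one checks that the target elements $e\wedge a$, $-2 f\wedge a$, $2e\wedge f$ satisfy precisely the relations in (a) using the explicit Lie bracket on $\wedge^2 V$ recalled in Section~2.1 (with $V = A^1(\Hilb^n(S))\oplus U_\BQ$, and using $(a,a)$ there literally the Beauville--Bogomolov square of $a$) — so there is a surjective Lie algebra homomorphism $\so(A^1(\Hilb^n(S))\oplus U_\BQ) \twoheadrightarrow \Fg'$; since the source is simple (as $A^1(\Hilb^n(S))\oplus U_\BQ$ is a non-degenerate quadratic space of dimension $\geq 3$ and of the appropriate signature so that $\so$ is simple) and $\Fg'\neq 0$, the map is an isomorphism. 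Composing with $\mathrm{cl}$ and matching against Verbitsky's description $\Fg_{\mathrm{NS}}(X) = \so(H^2(X,\BQ)\oplus U_\BQ)$ restricted to the algebraic part then yields Theorem~\ref{Thm1}, with $\rho$ the inverse isomorphism $\Fg_{\mathrm{NS}}(X)\xrightarrow{\sim}\Fg'\subset A^\ast(X\times X)$.

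I expect the main obstacle to be the $a=\delta$ case of $[e_\delta,\widetilde f_\delta]=(\delta,\delta)h$: this is the one genuinely intricate calculation, requiring careful treatment of normal ordering in a triple product of Nakajima operators, correct handling of the signs $(-1)^i$ in $\Fq_{-i}$, and repeated use of the small-diagonal decomposition \eqref{decomp_delta123} together with the surface relations to simplify the $S^2$-valued coefficient that emerges down to $c_2-c_1$, hitting the scalar $(\delta,\delta)=2-2n$ exactly on the nose. A secondary subtlety is making sure the formula for $\widetilde f_\delta$ is written in a manifestly well-defined way despite the $1/i$, $1/j$, $1/k$ denominators — the terms with a vanishing index are controlled by $\Fq_0=0$, but one should confirm that no spurious $i=0$ contributions survive after normal ordering and that the apparent poles genuinely cancel against the constraint $i+j+k=0$.
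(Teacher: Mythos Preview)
Your overall plan is sound and close to the paper's, but there are two genuine gaps.

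First, your degree count for $[e_\delta,\widetilde f_\delta]$ is wrong: a commutator of two cubic expressions in Heisenberg generators has a \emph{quartic} leading part (one contraction takes $3+3$ to $4$), plus quadratic and scalar pieces. The identity to be proven is $[e_\delta,\widetilde f_\delta]=(2-2n)h$ on $A^\ast(\Hilb^n(S))$, and the paper handles this by rewriting the right-hand side as $2h+2L_0(1)h$ with $L_0=\sum_{k>0}\Fq_k\Fq_{-k}(\Delta)$ acting by $-n$, so that \emph{both} sides acquire a quartic part when expanded in normal-ordered Nakajima operators; the quartic parts are then matched using the small-diagonal decomposition~\eqref{decomp_delta123}, and only afterwards are the quadratic residuals compared. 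Your outline (``$[e_\delta,\widetilde f_\delta]$ is quadratic \ldots simplify the resulting coefficient in $A^\ast(S^2)$'') misses this quartic matching entirely, and it is precisely where $e(S)=24$ enters (together with $\sum_{i+j=k}ij=\tfrac16 k(k^2-1)$ on the quadratic side).

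Second, the mixed brackets $\kappa_{\alpha\delta}:=[e_\alpha,\widetilde f_\delta]$ do \emph{not} vanish; what is true (and what part (a) requires by bilinearity) is only $\kappa_{\alpha\delta}+\kappa_{\delta\alpha}=0$. More seriously, your argument for (c) assumes the three relations in (a) already present $\so(A^1(\Hilb^n S)\oplus U_{\BQ})$, but they do not: to get a well-defined surjection $\so\twoheadrightarrow\Fg'$ one must verify in $\Fg'$ a full set of $\so$-relations, including $[e_a,e_b]=0$, $[\widetilde f_a,\widetilde f_b]=0$, $\kappa_{ab}+\kappa_{ba}=2(a,b)h$, and the commutators of $\kappa_{ab}$ with $e_c,\widetilde f_c,\kappa_{cd}$. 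The paper writes these out explicitly, handles the purely $A^1(S)$ cases via an auxiliary embedding $\Gamma\mapsto T_\Gamma$ that lifts relations from correspondences on $S\times S$ (Lemma~\ref{Lemma_TT} and Corollary~\ref{cor_T}), and for the remaining $\delta$-relations either computes directly or invokes Yin's theorem that relations among $\Delta_{ij},c_i,\alpha_j$ in $A^\ast(S^k)$ for $k\leq 5$ agree with the cohomological ones.
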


We make several remarks.

\begin{rmk}
Consider Lefschetz duals on Hilbert schemes of points of arbitrary smooth projective surfaces $S$.
By the discussion in Section~\ref{section_surface_part} below,
for any $\alpha \in H^2(S,\BQ)$ of non-zero square
the operator $e_{\alpha}$ 
admits the Lefschetz dual $\widetilde{f}_{\alpha}/(\alpha \cdot \alpha)$
where $\widetilde{f}_\alpha$ is defined as in \eqref{ft_a}.
However, the Lefschetz dual of more general elements $a \in H^2(\Hilb^n(S))$
do not seem to admit a nice expression in terms of Nakajima operators.
For example on $\Hilb^n(\p^2)$ we have in general $[f_a, f_b] \neq 0$ and
computer calculations suggest that the expression for $f_a$ involves expressions in Nakajima operators $\Fq_n$ of arbitrarily high degree.
The fact that the Lefschetz duals on $\Hilb (\textup{K3})$ can be expressed as quadratic and cubics in Nakajima operators is remarkable. 
It requires both $K=0$ and $e(S) = 24$, see the proof below.
\end{rmk}

\begin{rmk} \label{Remark_h}
Let $X = \Hilb^n(S)$ where $S$ is a K3 surface.
We consider the action of $h$ on the Chow group of zero-dimensional cycles.
For example let $x_1 ,\ldots, x_n \in S$ be distinct points and consider the subscheme
$z = \{ x_1 , \ldots, x_n \} \in X$. 
Then
\[ h([z]) = 2\sum_{j=1}^{n} [ \{ x_1 , \ldots, x_{j-1}, x_{j+1}, \ldots, x_n, c_0 \} ] \]
where $c_0 \in S$ is a representative of the Beauville-Voisin class $c \in A^2(S)$.
More generally, using the commutation relations one checks that
the action of $h$ on $A_0(X)$ is diagonalizable with eigenvalues $0,2, \ldots, 2n$ and corresponding eigenspaces
\[ A_0(X)_{2i} = \mathrm{Span}_{\BQ}\left( \Fq_1([p_1] - c) \cdots \Fq_1([p_i]-c) \Fq_1(c)^{n-i} 1\, \middle|\, p_1, \ldots, p_i \in S \right). \]
The eigenspace decomposition recovers the proposed splitting of the conjectural Bloch-Beilinson filtration given in \cite{Vial}.
\end{rmk}

\subsection{The surface part} \label{section_surface_part}
We begin with some general remarks that hold for every smooth projective surface $S$.
For a correspondence $\Gamma \in A^{\ast}(S \times S)$ we let $\Gamma'$ be its transpose
which is defined as $\tau_{\ast}(\Gamma)$ where $\tau$ is the automorphism of $S^2$ that swaps the factors.
The correspondence $\Gamma$ acts on $A^{\ast}(S)$ via
\[ \Gamma(\gamma) = \pi_{2 \ast}( \pi_1^{\ast}(\gamma) \cdot \Gamma ). \]
Given two correspondences $\Gamma$ and $\widetilde{\Gamma}$ their composition as operators on $A^{\ast}(S)$ is
\[ \Gamma \circ \widetilde{\Gamma} = \pi_{13 \ast}( \widetilde{\Gamma}_{12} \cdot \Gamma_{23} ), \]
where $\pi_{13}:S^3 \to S \times S$ is the projection to the outer factors.

Let $\deg(\Gamma)$ denote the degree of the homogeneous correspondence $\Gamma$, that is $\Gamma \in A^{\deg(\Gamma)}(S \times S)$.
Define the following operator on $A^{\ast}(\Hilb(S))$:
\begin{equation} T_{\Gamma} = - \sum_{n > 0} n^{\deg(\Gamma) - 3} \Fq_{n} \Fq_{-n}( \Gamma' ). \label{def_T_Gamma} \end{equation}
\begin{lemma} \label{Lemma_TT}
For any $\mathsf{C} \in A^{\ast}(S^k)$ and homogeneous correspondence $\Gamma$,
\begin{multline*}
[ T_{\Gamma}, \Fq_{n_1} \cdots \Fq_{n_k} (C) ]
= \sum_{i : n_i >0} n_i^{\deg(\Gamma)-2} \Fq_{n_1} \cdots \Fq_{n_k}( \id_{S^{i-1}} \times \Gamma \times \id_{S^{k-i}} (\mathsf{C})) \\
+  (-1)^{\deg(\Gamma)-3} \sum_{i : n_i <0} n_i^{\deg(\Gamma)-2} \Fq_{n_1} \cdots \Fq_{n_k}( \id_{S^{i-1}} \times \Gamma' \times \id_{S^{k-i}} (\mathsf{C})).
\end{multline*}
\end{lemma}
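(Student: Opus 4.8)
The plan is to compute the commutator $[T_\Gamma, \Fq_{n_1}\cdots\Fq_{n_k}(\mathsf{C})]$ by repeatedly commuting the single operator $T_\Gamma$ past the individual Nakajima operators $\Fq_{n_i}$, using the Leibniz rule for commutators:
\[
[T_\Gamma, \Fq_{n_1}\cdots\Fq_{n_k}] = \sum_{i=1}^{k} \Fq_{n_1}\cdots\Fq_{n_{i-1}}\, [T_\Gamma, \Fq_{n_i}]\, \Fq_{n_{i+1}}\cdots\Fq_{n_k}.
\]
So the problem reduces to the single-operator identity: for each $i$,
\[
[T_\Gamma, \Fq_{n}(\beta)] = n^{\deg(\Gamma)-2}\,\Fq_n(\Gamma\beta) \quad (n>0), \qquad [T_\Gamma, \Fq_n(\beta)] = (-1)^{\deg(\Gamma)-3} |n|^{\deg(\Gamma)-2}\,\Fq_n(\Gamma'\beta)\quad(n<0),
\]
after which one has to carefully track the $S^k$-bookkeeping to see that $\Gamma$ (resp.\ $\Gamma'$) lands in the $i$-th tensor slot and acts as $\id_{S^{i-1}}\times\Gamma\times\id_{S^{k-i}}$ on $\mathsf{C}$.

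\textbf{Step 1: the single-operator commutator.} First I would prove the displayed single-$\Fq$ identity. Since $T_\Gamma = -\sum_{m>0} m^{\deg(\Gamma)-3}\Fq_m\Fq_{-m}(\Gamma')$, I would expand $[T_\Gamma,\Fq_n]$ using the Heisenberg relations \eqref{Nak_com}. Writing $d = \deg(\Gamma)$ for brevity, for $n>0$ the only surviving term comes from $[\Fq_{-m},\Fq_n]$ with $m=n$, contributing (up to sign and the $m^{d-3}$ weight) a factor of $n$ from the Heisenberg relation and one contraction of $\Gamma'$ against the diagonal, which is precisely the composition $\Gamma$ acting on the surviving $\Fq_n$-insertion class — i.e.\ $n\cdot n^{d-3} = n^{d-2}$ and the correspondence $\Gamma$ applied to $\beta$. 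For $n<0$ the surviving contraction is with $[\Fq_m,\Fq_n]$, and the transpose gets flipped: one picks up $\Gamma'$ acting on $\beta$, together with the sign $(-1)^{d-3}$ coming from the parity of $\Gamma$ under transposition interacting with the $(-1)^i$ normalization built into $\Fq_{-i}$. This is the computation of \cite{Lehn}-type, carried out cycle-theoretically as in \cite{MN}; I would cite their framework rather than redo the diagonal-class manipulations.

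\textbf{Step 2: reassembling the $k$-fold product.} Having the single-operator formula, I would substitute into the Leibniz expansion. The subtlety is that $\Fq_{n_1}\cdots\Fq_{n_k}(\mathsf{C})$ is the operator associated to the correspondence $\Fq_{n_1}\cdots\Fq_{n_k}$ from $S^k$ to $\Hilb$, applied to the class $\mathsf{C}\in A^\ast(S^k)$. When $T_\Gamma$ is commuted past the $i$-th factor $\Fq_{n_i}$, the resulting extra correspondence $\Gamma$ (or $\Gamma'$) acts on the $i$-th $S$-factor only, which by definition of the $\Fq_{n_1}\cdots\Fq_{n_k}(\cdot)$ notation means applying $\id_{S^{i-1}}\times\Gamma\times\id_{S^{k-i}}$ to $\mathsf{C}$. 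Splitting the sum over $i$ according to the sign of $n_i$ and collecting terms gives exactly the two sums in the statement. I would note the trivial case $n_i=0$: since $\Fq_0=0$, such indices contribute nothing, consistent with the sums being restricted to $n_i\neq 0$.

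\textbf{Main obstacle.} The genuine content is entirely in Step 1 — the precise sign $(-1)^{\deg(\Gamma)-3}$ and the precise power $n^{\deg(\Gamma)-2}$ — and in particular keeping straight the interplay between (i) the $(-1)^i$ sign in the definition of $\Fq_{-i}$, (ii) the transposition $\Gamma\mapsto\Gamma'$ forced by which of $\Fq_m,\Fq_{-m}$ in $T_\Gamma$ survives the contraction, and (iii) the degree shift $m^{\deg(\Gamma)-3}\mapsto m^{\deg(\Gamma)-2}$ produced by the factor $m$ in the Heisenberg relation. Step 2 is then purely formal bookkeeping, but one must be careful that no additional signs arise from reordering the $\Fq$'s past each other, which is legitimate here because the correction terms in \eqref{Nak_com} are themselves lower-weight and, when fed back into the commutator, either vanish or get absorbed — more cleanly, the Leibniz rule for commutators is an exact identity requiring no reordering at all, so this worry does not actually occur.
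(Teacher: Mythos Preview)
Your approach is essentially identical to the paper's: both commute $T_\Gamma$ past each $\Fq_{n_i}$ via the Leibniz rule and reduce everything to a single application of the Heisenberg relation \eqref{Nak_com}, with the paper simply writing out the $n_i>0$ contribution explicitly and declaring the case $n_i<0$ similar. One minor slip worth flagging: your single-operator formula for $n<0$ should read $-|n|^{d-2}\,\Fq_n(\Gamma'\beta)$, which is what the paper's $(-1)^{d-3}n^{d-2}$ equals for $n<0$; this sign arises not from any transposition parity or from the $(-1)^i$ built into $\Fq_{-i}$, but simply from the leading minus in the definition of $T_\Gamma$ --- for $n>0$ that minus cancels against the sign in $[\Fq_{-n},\Fq_n]=-n$, whereas for $n<0$ the surviving bracket $[\Fq_{|n|},\Fq_n]=|n|$ carries no sign to cancel it.
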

\begin{proof}
We commute $T_{\Gamma}$ through the Nakajima operators. If $n_i > 0$ then the $i$-th term contributes
\begin{align*}
& -n_i^{\deg(\Gamma)-3} \Fq_{n_1} \cdots \underbrace{\Fq_{n_i} [ \Fq_{-n_i}}_{\text{from } T_{\Gamma}}, \Fq_{n_i} ] \Fq_{n_{i+1}} \cdots \Fq_{n_k} ( \mathsf{C}_{\{i,i+1 \}^{c}} \cdot \Gamma'_{i,i+1} ) \\
= & n_i^{\deg(\Gamma)-2} \Fq_{n_1} \cdots \Fq_{n_k}( \pi_{\{i+1, i+2\}^c \ast}( \Delta_{i+1, i+2} \cdot \mathsf{C}_{\{i,i+1\}^{c}} \cdot \Gamma'_{i,i+1} )) \\
= & n_i^{\deg(\Gamma)-2} \Fq_{n_1} \cdots \Fq_{n_k}( \id_{S^{i-1}} \times \Gamma \times \id_{S^{k-i}} (\mathsf{C}))
\end{align*}
where we write $\mathsf{C}_{\{i,i+1\}^{c}}$ for the pullback of $\mathsf{C}$ to $S^{k+2}$ along the projection
which forgets the factors $i$ and $i+1$, etc.
The case $n_i < 0$ is similar. 
\end{proof}
\begin{cor} \label{cor_T} $[ T_{\Gamma}, T_{\widetilde{\Gamma}} ] = T_{[\Gamma, \widetilde{\Gamma}]}$ for any homogeneous correspondences $\Gamma, \widetilde{\Gamma}$. \end{cor}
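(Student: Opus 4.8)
The plan is to deduce the identity directly from Lemma~\ref{Lemma_TT}. First observe that if $\Gamma\in A^d(S\times S)$ and $\widetilde\Gamma\in A^{d'}(S\times S)$ then both $\Gamma\circ\widetilde\Gamma$ and $\widetilde\Gamma\circ\Gamma$ lie in $A^{d+d'-2}(S\times S)$, so $[\Gamma,\widetilde\Gamma]$ is again homogeneous, of degree $\deg\Gamma+\deg\widetilde\Gamma-2$, and $T_{[\Gamma,\widetilde\Gamma]}$ is defined.

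Next, write $T_{\widetilde\Gamma} = -\sum_{n>0}n^{\deg\widetilde\Gamma-3}\Fq_n\Fq_{-n}(\widetilde\Gamma')$ and apply Lemma~\ref{Lemma_TT} to each summand, taking $k=2$, $(n_1,n_2)=(n,-n)$, and $\mathsf{C}=\widetilde\Gamma'$. Exactly two terms appear: the factor with positive index contributes $n^{\deg\Gamma-2}\Fq_n\Fq_{-n}\big((\Gamma\times\id_S)(\widetilde\Gamma')\big)$, and the one with negative index contributes $(-1)^{\deg\Gamma-3}(-n)^{\deg\Gamma-2}\Fq_n\Fq_{-n}\big((\id_S\times\Gamma')(\widetilde\Gamma')\big)$. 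Since $(-1)^{\deg\Gamma-3}(-1)^{\deg\Gamma-2}=-1$, the second coefficient is $-n^{\deg\Gamma-2}$, and combining the two terms and then summing against the prefactor $-n^{\deg\widetilde\Gamma-3}$ gives
\[
[T_\Gamma, T_{\widetilde\Gamma}] = -\sum_{n>0} n^{\deg\Gamma+\deg\widetilde\Gamma-5}\,\Fq_n\Fq_{-n}\big((\Gamma\times\id_S)(\widetilde\Gamma') - (\id_S\times\Gamma')(\widetilde\Gamma')\big).
\]
The exponent $\deg\Gamma+\deg\widetilde\Gamma-5$ equals $\deg[\Gamma,\widetilde\Gamma]-3$; this matching is exactly what the shift by $-3$ in \eqref{def_T_Gamma} is designed to produce.

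It therefore remains to identify the cycle $(\Gamma\times\id_S)(\widetilde\Gamma') - (\id_S\times\Gamma')(\widetilde\Gamma')$ on $S\times S$ with $[\Gamma,\widetilde\Gamma]'$. This is a formal manipulation of correspondences: unwinding the definition of "acting by $\Gamma$ on a tensor factor" and of composition of correspondences — both being an intersection product followed by a projection — the projection formula yields $(\Gamma\times\id_S)(\widetilde\Gamma') = (\Gamma\circ\widetilde\Gamma)'$ and $(\id_S\times\Gamma')(\widetilde\Gamma') = (\widetilde\Gamma\circ\Gamma)'$, so the difference is $(\Gamma\circ\widetilde\Gamma)' - (\widetilde\Gamma\circ\Gamma)' = [\Gamma,\widetilde\Gamma]'$. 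One checks these two identities at once on decomposable correspondences $a\boxtimes b$, where they reduce to associativity of the pairing $\langle-,-\rangle$, and the general case follows since both sides are bilinear cycle operations. Substituting back gives $[T_\Gamma,T_{\widetilde\Gamma}] = T_{[\Gamma,\widetilde\Gamma]}$.

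The only genuinely delicate point is the bookkeeping in the last paragraph: one must keep careful track of which factor of $\widetilde\Gamma'$ plays the role of source and which of target, and correspondingly of where the transposes fall, in order to be certain the two terms land on $(\Gamma\circ\widetilde\Gamma)'$ and $(\widetilde\Gamma\circ\Gamma)'$ in the correct order (rather than with an overall sign or with the two swapped, which would give $-T_{[\Gamma,\widetilde\Gamma]}$ instead). Everything else — the combination of the two summands of Lemma~\ref{Lemma_TT}, the sign computation, and the exponent arithmetic — is routine.
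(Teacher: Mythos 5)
Your derivation is exactly the intended one: the paper states the corollary without proof precisely because it follows from Lemma~\ref{Lemma_TT} applied to each summand $\Fq_n\Fq_{-n}(\widetilde\Gamma')$ of $T_{\widetilde\Gamma}$, and your sign and exponent bookkeeping, as well as the identification $(\Gamma\times\id)(\widetilde\Gamma')-(\id\times\Gamma')(\widetilde\Gamma')=[\Gamma,\widetilde\Gamma]'$, are correct. One small caveat: the remark that the last identity can be ``checked on decomposables $a\boxtimes b$ and extended by bilinearity'' is not a valid reduction in Chow (decomposable classes do not span $A^{\ast}(S\times S)$), but this is harmless since the direct projection-formula computation you also invoke proves the identity for arbitrary correspondences.
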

By Corollary~\ref{cor_T} for every $n \geq 1$ we have an embedding of Lie algebras
\[ T: A^{\ast}(S \times S) \to A^{\ast}( \Hilb^n(S) \times \Hilb^n(S) ), \, \Gamma \mapsto T_{\Gamma}. \]

We now specialize to the case of K3 surfaces.
For every $\alpha \in A^1(S)$ consider the correspondences
\begin{equation} \label{35wefs} 
e_{\alpha} = \Delta_{\ast}(\alpha) = c_1 \alpha_2 + \alpha_1 c_2, \quad
\widetilde{f}_{\alpha} = 2(\alpha_1 + \alpha_2), \quad 
h = 2 (c_2 - c_1).
\end{equation}
Either by a direct check or because $\BQ 1 \oplus A^1(S) \oplus \BQ c$ is an invariant subring of $A^{\ast}(S)$ which injects into cohomology,
the correspondences \eqref{35wefs} satisfy the relations of part (a) of Theorem~\ref{Thm2}.
Applying $T$ to these correpondences precisely yields the operators \eqref{e_a}, \eqref{def_h}, \eqref{ft_a}.
Using Corollary~\ref{cor_T} we conclude that Theorem~\ref{Thm2}(a) holds for all $n \geq 1$ and $\alpha \in A^1(S)$.

Further, by the cohomological version of Lemma~\ref{Lemma_TT}
for every $u \in H^i(S)$ and for all $n \in \BZ$ one has 
\[ [h, \Fq_n(u)] = (i-2) \Fq_n(u). \]
Since the Nakajima operators generate the cohomology of Hilbert schemes and $\Fq_n(u)$ is of degree $i$,
we conclude that $h$ acts on $H^j(\Hilb^n(S))$ by multiplication by $j-2n$. This shows part (b) of Theorem~\ref{Thm2} for all $\alpha \in A^1(S)$.
On $S$ the Lie algebra generated by the correspondences \eqref{35wefs} for all $\alpha \in A^1(S)$ is $\so( A^1(S) \oplus U )$ (e.g.
use again the argument with the invariant subring and
that in cohomology we know the result from Verbitsky).
Applying $T$ proves the same on $\Hilb^n(S)$.

\subsection{The general case}
For all $a,b \in A^1(S) \oplus \BZ \delta$ let $\kappa_{ab} = [e_a, \widetilde{f}_b]$.
To prove the remainder of Theorem~\ref{Thm2} we need to establish the following commutation relations:
\begin{gather*}
[h, e_a] = 2 e_a, \quad [h, \widetilde{f}_a] = -2 \widetilde{f}_a, \quad [h, \kappa_{ab}] = 0 \\
[e_a, e_b] = 0, \quad [\widetilde{f}_a, \widetilde{f}_b] = 0, \quad [e_a, \widetilde{f}_a] = (a,a) h \\
\kappa_{ab} + \kappa_{ba} = 2 (a,b) h, \\
[\kappa_{ab}, e_c] = 2 (a,b) e_c + 2(b,c) e_a - 2 (a,c) e_b \\
[\kappa_{ab}, \widetilde{f}_c] = -2 (a,b) \widetilde{f}_c + 2(b,c) \widetilde{f}_a - 2(a,c) \widetilde{f}_b \\
\frac{1}{2} [\kappa_{ab}, \kappa_{cd}] =  (a,d) \kappa_{bc} -(a,c) \kappa_{bd} - (b,d) \kappa_{ac} + (b,c) \kappa_{ad} + ((a,c) (b,d)-(a,d)(b,c)) h.
\end{gather*}
By the discussion in Section~\ref{section_surface_part} we know these relations when all classes involved are from $A^1(S)$.
Moreover it suffices to check the relations on a basis, and 
we only need to check those relations that do not follow from the Jacobi identity and previously established relations.
Hence it is enough to check for all $\alpha, \beta \in A^1(S)$ the following.
\begin{enumerate}
\item[(a)] $[h, e_{\delta}] = 2 e_{\delta}$ and $[h, \widetilde{f}_{\delta}] = -2 \widetilde{f}_{\delta}$
\item[(b)] $[\widetilde{f}_{\alpha}, \widetilde{f}_{\delta}] = 0$.
\item[(c)] $[e_{\delta}, \widetilde{f}_{\delta}] = (2-2n) h$ on $A^{\ast}(\Hilb^n(S))$.
\item[(d)] $\kappa_{\alpha \delta} = -\kappa_{\delta \alpha}$
\item[(e)] $[h, \kappa_{\alpha \delta}] = 0$
\item[(f)] $[\kappa_{\alpha \beta}, e_{\delta}] = 2 (\alpha, \beta) e_{\delta}$.
\item[(g)] $[\kappa_{\alpha \beta}, \widetilde{f}_{\delta}] = -2 (\alpha, \beta) \widetilde{f}_{\delta}$.
\end{enumerate}
We check part (c) below in detail by a direct computation.
The remaining relations follow from a straightforward application of Lemma~\ref{Lemma_TT} and we skip the details.
One may see them also as follows: Each is a relation between Nakajima operators of degree at most $5$,
which after applying the commutation relations \eqref{Nak_com}, reduces to a relation
in $S^k$ for some $k \leq 5$ between classes which are polynomials in $\Delta_{ij}$, $c_i$ and $\alpha_j$ for some $\alpha \in A^1(S)$.
By the result of Verbitsky \cite{V} we know these relations hold in cohomology.
Since $k \leq 5$ we hence know from work of Voisin \cite{Voisin} (see also \cite{Yin}) that they hold in Chow as well.

\begin{proof}[Proof of Relation (c)]
By \cite[Thm.1.6]{MN} the operator
\[ L_0 = \sum_{k>0} \Fq_{k} \Fq_{-k}(\Delta) \]
acts by multiplication by $-n$ on $A^{\ast}(\Hilb^n(S))$. Hence we need to show
\[ [e_{\delta}, \widetilde{f}_{\delta}] = 2h + 2 L_0(1) h. \]
We do this by expanding both sides in Nakajima operators ordered in normal product ordering.
For the right hand side we obtain
\[ 2h + 2 L_0(1) h
= 4 \sum_{k>0} \frac{1-k}{k} \Fq_k \Fq_{-k}(c_2 - c_1) + 4 \sum_{k, \ell > 0} \frac{1}{k} :\Fq_k \Fq_{-k}(c_2-c_1) \Fq_{\ell} \Fq_{-\ell}(\Delta):\]

For the left hand side we first consider the quartic terms, that is those of degree $4$ in a normal ordering.
These involve precisely one interaction of the Nakajima operators.
Let $\CE$ be the argument of $\Fq_i \Fq_j \Fq_k$ in the definition of $\widetilde{f}_{\delta}$.
Since the argument of the cubic term in $e_{\delta}$ and $\widetilde{f}_{\delta}$ is $S_3$-symmetric,
the quartic term reads
\begin{align*}
& 9 \cdot (-\frac{1}{6}) \cdot (-\frac{1}{3}) \sum_{\substack{j_1 + k_1 = -i_1 \\ j_2 + k_2 = i_1}}
:[\Fq_{i_1}, \Fq_{-i_1}] \Fq_{j_1} \Fq_{k_1} \Fq_{j_2} \Fq_{k_2} ( \Delta_{134} \cdot \CE_{256} ): \\
& = \frac{1}{2} \sum_{\substack{j_1 + k_1 = -i_1 \\ j_2 + k_2 = i_1}}
i_1 :\Fq_{j_1} \Fq_{k_1} \Fq_{j_2} \Fq_{k_2} ( \pi_{3456 \ast}( \Delta_{134} \cdot \CE_{256} \cdot \Delta_{12} )): \\
& = 
\frac{1}{2} \sum_{\substack{a+b+c+d=0 \\ a+b \neq 0}}
:\Fq_{a} \Fq_b \Fq_c \Fq_d\left( 2 \frac{c+d}{d^2} \Delta_{123} - \frac{4}{d} \Delta_{12} c_3 + 2 \frac{(c+d)}{c \cdot d} c_1 c_2 + \frac{1}{(c+d)} \Delta_{12} \Delta_{34} \right):.
\end{align*}
The term with $\Delta_{12} \Delta_{34}$ cancels by symmetrizing.
For the remaining terms we insert the decomposition \eqref{decomp_delta123} of the small diagonal $\Delta_{123}$
and observe that the sum vanishes when it is taken over all $a,b,c,d$ such that $a+b+c+d = 0$.
The terms we overcounted (those with $a+b=0$) sum up precisely to (the negative of) the quartic term in $2h+2 L_0(1) h$.

For the quadratic term we have two Nakajima interactions. We get
\begin{multline*}
\quad \text{Quadratic terms in } [e_{\delta}, \widetilde{f}_{\delta}]
= 
\sum_{\substack{ i_1 + j_1 + k_1=0 \\ i_1, j_1 < 0}} [ \Fq_{i_1}, \Fq_{-i_1}] [\Fq_{j_1}, \Fq_{-j_1}] \Fq_{k_1} \Fq_{-k_1} ( \Delta_{135} \cdot \CE_{246} ) \\
+ \sum_{\substack{ i_1 + j_1 + k_1=0 \\ i_1, j_1 > 0}} [ \Fq_{i_1}, \Fq_{-i_1}] [\Fq_{-j_1}, \Fq_{j_1}] \Fq_{-k_1} \Fq_{k_1} ( \Delta_{146} \cdot \CE_{235} ).
\end{multline*}
Using the Nakajima commutation relations and $\Delta \cdot \Delta = e(S) \cdot c_1 c_2$ this simplifies as desired to
\begin{align*} 
\sum_{k > 0} \Big( 4 (k-1) - \sum_{\substack{i+j=k\\i,j>0}} e(S) \frac{i \cdot j}{k^2} \Big) \Fq_{k} \Fq_{-k}(c_2 - c_1)
& = 4 \sum_{k > 0} \frac{1-k}{k} \Fq_k \Fq_{-k}(c_2 - c_1),
\end{align*}
where in the last equality we have used $\sum_{i+j=k} i \cdot j = \frac{1}{6} k (k^2-1)$ and $e(S) = 24$.
\end{proof}

\end{document}